\title{Some notes on the signed bad number in bipartite graphs} 
\author {
D.A. Mojdeh\thanks{Corresponding author} and Babak Samadi\\
Department of Mathematics\\
University of Mazandaran, Babolsar, Iran\\
{\tt damojdeh@umz.ac.ir$^*$}\\
{\tt samadibabak62@gmail.com}\vspace{3mm}\\
}
\date{}
 \newtheorem{theorem}{Theorem}[section]
\newtheorem{lemma}[theorem]{Lemma}
\theoremstyle{definition}
\theoremstyle{remark}
\newtheorem{rem}[theorem]{Remark}
\begin{document}

\maketitle
\begin{abstract}
In this paper, we deal with the signed bad number and the negative decision number of graphs. We show that two upper bounds concerning these two parameters for bipartite graphs in papers [Discrete Math. Algorithms Appl. 1 (2011), 33--41] and [Australas. J. Combin. 41 (2008), 263--272] are not true as they stand. We correct them by presenting more general bounds for triangle-free graphs by using the classic theorem of Mantel from the extremal graph theory and characterize all triangle-free graphs attaining these bounds.\vspace{.5mm}\\
\noindent
{\bf Keywords:} Negative decision number, signed bad number, triangle-free graph.\vspace{.5mm}\\
{\bf MSC 2000}: 05C69.
\end{abstract}


\section{Introduction}

\ \ \ Throughout this paper, let $G$ be a finite graph with vertex set $V(G)$ and edge set $E(G)$. We use \cite{we} as a reference for terminology and notation which are not defined here. The {\em open neighborhood} of a vertex $v$ is denoted by $N(v)$, and the {\em closed neighborhood} of $v$ is $N[v]=N(v)\cup \{v\}$. The {\em corona} of two graphs $G_{1}$ and $G_{2}$ is the graph $G=G_{1}\circ G_{2}$ formed from one copy of $G_{1}$ and $|V(G_{1})|$ copies of $G_{2}$ where the $ith$ vertex of $G_{1}$ is adjacent to every vertex in the $ith$ copy of $G_{2}$.

Let $S \subseteq V(G)$. For a real-valued function $f:V(G)\rightarrow \mathbb{R}$ we define $f(S)=\sum_{v\in S}f(v)$. Also, $f(V(G))$ is the weight of $f$. A {\em signed bad function} ({\em bad function}), abbreviated SBF (BF), of $G$ is  a function $f:V(G)\rightarrow \{-1,1\}$ such that $f(N[v])\leq1$ ($f(N(v))\leq1$), for every $v\in V(G)$. The {\em signed bad number} ({\em negative decision number}) is $\beta_{s}(G)=\max \{f(V) | f  \mbox{\ is a SBF of}\ G \}$ ($\beta_{D}(G)=\max \{f(V) | f  \mbox{\ is a BF of}\ G \}$). Indeed, the negative decision number can be considered as the total version of the signed bad number. These two graph parameters have been studied in \cite{gks} and \cite{w}, respectively.

In 2011, Ghameshlou et al. \cite{gks} gave the following upper bound on $\beta_{s}(G)$ of a bipartite graph $G$.
 
\begin{theorem}\label{T1.1}\emph{(\cite{gks})}
If $G$ is a bipartite graph of order n, then
$$\beta_{s}(G)\leq n+2-2\lceil\sqrt{n+2}\rceil.$$
\end{theorem}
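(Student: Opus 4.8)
The plan is to start from a signed bad function $f$ of maximum weight and convert the global quantity $\beta_{s}(G)=f(V)$ into a statement about the sizes of the two value classes. Write $P=\{v:f(v)=1\}$ and $M=\{v:f(v)=-1\}$, and set $p=|P|$, $m=|M|$, so that $n=p+m$ and $f(V)=p-m=2p-n$. A short manipulation shows that the asserted inequality $\beta_{s}(G)\le n+2-2\lceil\sqrt{n+2}\,\rceil$ is equivalent to $p\le n+1-\lceil\sqrt{n+2}\,\rceil$, i.e. to the lower bound $m\ge\lceil\sqrt{n+2}\,\rceil-1$ on the number of vertices receiving $-1$; squaring, this is in turn the vertex-count bound $p\le m^{2}+m-1$. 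So the whole problem becomes: a maximum signed bad function cannot have too few negative vertices.

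Next I would turn the defining inequalities $f(N[v])\le 1$ into local counting statements. For a vertex $v$ let $a(v)$ and $b(v)$ denote the numbers of neighbours carrying $+1$ and $-1$, respectively. If $f(v)=1$ then $f(N[v])=1+a(v)-b(v)\le 1$ forces $a(v)\le b(v)$, while if $f(v)=-1$ then $f(N[v])=-1+a(v)-b(v)\le 1$ forces $a(v)\le b(v)+2$. Writing $e(P,M)$ for the number of $P$--$M$ edges and $e(M,M)$ for the number of edges inside $M$, I would double count $P$--$M$ edges from the negative side: summing the inequality over $v\in M$ gives $e(P,M)=\sum_{v\in M}a(v)\le\sum_{v\in M}(b(v)+2)=2\,e(M,M)+2m$.

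The remaining ingredient is an upper bound on $e(M,M)$. Since $G$ is bipartite, so is the induced subgraph $G[M]$, and the classical bipartite (Mantel-type) estimate gives $e(M,M)\le\lfloor m^{2}/4\rfloor$. Feeding this into the previous line yields $e(P,M)\le m^{2}/2+2m$. To close the argument I would bound $e(P,M)$ from below by $p$: a positive vertex $v$ with at least one neighbour satisfies $a(v)+b(v)=\deg(v)\ge 1$ together with $a(v)\le b(v)$, hence $b(v)\ge 1$, so $v$ contributes at least one edge to $e(P,M)$. Combining $p\le e(P,M)\le m^{2}/2+2m$ then produces an inequality of the desired shape $p\le m^{2}+m-1$, which is exactly the estimate sought (after checking a few small values of $m$ by hand, where the direction of the comparison needs care).

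The step I expect to be the real obstacle — and the one that decides whether the stated bound is even correct — is precisely the lower bound $e(P,M)\ge p$. It rests on the claim that every positive vertex owns a negative neighbour, and this fails exactly for positive vertices of degree $0$: an isolated vertex may safely receive $+1$ without contributing any edge to $e(P,M)$, so nothing in the definition prevents a bipartite graph from carrying arbitrarily many such vertices. Thus the argument as sketched really controls only graphs whose positive vertices have positive degree, and I would expect to need a minimum-degree hypothesis (something like $\delta(G)\ge 1$) to push it through. Even granting that, reconciling the precise constant in $p\le m^{2}+m-1$ with the weaker $p\le m^{2}/2+2m$ that the counting actually delivers is delicate for small $m$, and this is exactly the kind of slack where a claimed extremal bound can turn out to be false; handling, or ruling out, low-degree vertices is the crux on which everything hinges.
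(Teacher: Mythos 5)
Your closing instinct is the right one, and you should follow it to its conclusion: the statement as quoted is \emph{false}, and the paper contains no proof of it. The entire point of the paper is to refute Theorem \ref{T1.1} --- the authors exhibit $P_{3}$ (where $\beta_{s}=1$ but the claimed bound is $-1$) and the bistar $P_{2}\circ\overline{K_{3}}$ (where $\beta_{s}=4$ but the claimed bound is $2$) as counterexamples, point out the gap in the original argument of \cite{gks} (two false edge-counting inequalities), and then prove a corrected bound. Your reduction is accurate: the claimed inequality is equivalent to $p\le m^{2}+m-1$, whereas what the double count actually delivers (even granting that every positive vertex has a negative neighbour) is $p\le m^{2}/2+2m$; the derived bound exceeds the needed one exactly when $m^{2}-2m-2<0$, i.e.\ for $m\in\{1,2\}$. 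The paper's two counterexamples are precisely the configurations that saturate your chain at those values: for $P_{3}$ one has $m=1$, $p=2>1=m^{2}+m-1$, and for the bistar $m=2$, $p=6>5=m^{2}+m-1$. Your other worry, isolated positive vertices, is also a genuine (and even cheaper) failure mode of the statement as quoted, though the paper's counterexamples do not rely on it.

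The salvageable part of your argument is, almost verbatim, the paper's Theorem \ref{T3.4}(i). Under $\delta(G)\ge1$ every $+1$ vertex has a $-1$ neighbour, so $p\le e(P,M)\le 2e(M,M)+2m\le m^{2}/2+2m$, which rearranges to $m^{2}+6m-2n\ge0$, hence $m\ge-3+\sqrt{9+2n}$ and $\beta_{s}(G)=n-2m\le n+6-2\sqrt{9+2n}$. The paper proves this for all triangle-free graphs by invoking Mantel's theorem for the estimate $e(M,M)\le\lfloor m^{2}/4\rfloor$ (your bipartite bound on $G[M]$ is the special case), and its concluding Remark shows that the corrected bound implies the original one for every order $n\notin\{3,8\}$, with a parity argument handling a few small orders. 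So do not try to close the gap you identified; instead record the counterexamples and prove the theorem with the hypothesis $\delta(G)\ge1$ and the weaker, correct constant.
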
 

In 2008, Wang \cite{w} exhibited the following upper bound on $\beta_{D}(G)$ of a bipartite graph $G$.

\begin{theorem}\label{T1.2}\emph{(\cite{w})}
If $G$ is a bipartite graph of order $n$, then
$$\beta_{D}(G)\leq n+3-\sqrt{4n+9}.$$
\end{theorem}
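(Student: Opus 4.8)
The plan is to translate the claimed inequality into a statement about the number of vertices receiving the value $-1$. Put $P=\{v\in V(G):f(v)=1\}$ and $M=\{v\in V(G):f(v)=-1\}$, and write $p=|P|$, $m=|M|$, so that $p+m=n$ and $f(V(G))=p-m=n-2m$. Substituting $f(V(G))=n-2m$ into $\beta_{D}(G)\le n+3-\sqrt{4n+9}$ and isolating the radical, the bound becomes $\sqrt{4n+9}\le 2m+3$; since the right-hand side is positive we may square, and after simplification this is exactly
\[
n\le m(m+3),
\]
equivalently $p\le m(m+2)$. Thus it suffices to prove, for a bad function of maximum weight, that the positive vertices are at most quadratically many in the number of negative vertices.

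For the counting step I would use the defining constraint in the form $d_{P}(v)\le d_{M}(v)+1$ for every $v$, where $d_{P}(v)$ and $d_{M}(v)$ count the neighbours of $v$ in $P$ and in $M$. Summing this over $w\in M$ bounds the number of edges between $P$ and $M$ by $\sum_{w\in M}\bigl(d_{M}(w)+1\bigr)$; since $G$ is triangle-free, Mantel's theorem applied to $G[M]$ gives $\sum_{w\in M}d_{M}(w)=2e(G[M])\le m^{2}/2$, so the number of $P$--$M$ edges is at most $m^{2}/2+m$ (even the crude bound $d_{P}(w)\le m$ yields at most $m^{2}$, which already suffices here). The idea is then to charge each positive vertex to a negative neighbour: if every vertex of $P$ had at least one neighbour in $M$, then $p$ would be at most the number of $P$--$M$ edges, giving $p\le m^{2}+2m$ and hence $n\le m(m+3)$, as required.

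The main obstacle is precisely this charging step, namely the claim that every positive vertex has a negative neighbour. A positive vertex $v$ with $d_{M}(v)=0$ is forced by $d_{P}(v)\le d_{M}(v)+1$ to satisfy $d_{P}(v)\le 1$, so it is either isolated or a leaf whose unique neighbour is positive; in both cases no constraint links $v$ to $M$, and such vertices cannot be charged. These are exactly the configurations that defeat the bound: an isolated vertex may be assigned $+1$ for free because $f(N(v))=f(\emptyset)=0\le 1$, and a disjoint union of edges with all endpoints $+1$ is a valid bad function with $m=0$ and weight $n$, contradicting $n\le m(m+3)$. I therefore expect that the statement cannot hold as written, and that a correct version must either forbid these low-degree components (for instance via a minimum-degree hypothesis) or absorb them into a correction term; the role of Mantel's theorem is to sharpen the $P$--$M$ edge count for the remaining triangle-free part, which is presumably how the authors recover a valid general bound.
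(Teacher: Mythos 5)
Your conclusion is exactly the paper's: this ``theorem'' is false as stated, and the whole point of the present paper is to refute it and prove a corrected version, so there is no proof of it here to match your attempt against --- only a refutation. The paper's counterexample is an infinite family of connected bipartite graphs: take $K_{p,p}\circ\overline{K_{p+1}}$ and attach two new degree-one vertices to each pendant vertex; assigning $-1$ on $V(K_{p,p})$ and $+1$ elsewhere gives a bad function of weight $6p^{2}+4p>n+3-\sqrt{4n+9}$ with $n=6p^{2}+8p$. Your simpler counterexamples (isolated vertices, or a perfect matching with every vertex $+1$) are also valid, though one might dismiss them as degenerate; the paper's family shows the failure persists for connected graphs with $\delta=1$, and in both cases the culprit is precisely the one you identify: positive vertices with no negative neighbour, which the constraint $d_{P}(v)\le d_{M}(v)+1$ leaves completely uncharged. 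Your proposed repair is also the paper's repair: under $\delta(G)\ge 2$ every positive vertex acquires a negative neighbour, the charging $|V_{+}|\le |[V_{+},V_{-}]|\le 2|E(G[V_{-}])|+|V_{-}|\le |V_{-}|^{2}/2+|V_{-}|$ goes through via Mantel's theorem, and one gets $|V_{-}|^{2}+4|V_{-}|-2n\ge 0$, hence $\beta_{D}(G)\le n+4-2\sqrt{4+2n}$ for triangle-free graphs of minimum degree at least $2$ --- this is exactly Theorem 3.4(ii) of the paper, which moreover characterizes the extremal graphs. In short, your analysis reconstructs both the paper's diagnosis and its corrected theorem; the only substantive difference is the choice of counterexample, where the paper's is stronger because it rules out rescuing the original claim merely by excluding isolated vertices or disconnected graphs.
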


The above inequalities are not true as they stand. For example $P_{3}$ and the bistar $G=P_{2}\circ \overline{K_{3}}$ with $\beta_{s}(P_{3})=1$ and $\beta_{s}(G)=4$ are two counterexamples to Theorem \ref{T1.1} (we will show that this theorem is true for $n\neq3,8$. Indeed, Part (i) of Theorem \ref{T3.4} is a generalization and improvement of it, simultaneously). But an infinite family of counterexamples to Theorem \ref{T1.2} can be obtained as follows:\\  
For any positive integer $p$, let $G$ be a bipartite graph formed from $G'=K_{p,p}\circ \overline{K_{p+1}}$ by joining two new vertices to each pendant vertex of $G'$. Then $n=6p^{2}+8p$. It is easy to see that $f(u)=-1$ and $f(v)=1$ for all $u\in V(K_{p,p})$ and $v\in V(G)\setminus V(K_{p,p})$, respectively, defines a maximum BF of $G$ with weight $\beta_{D}(G)=6p^{2}+4p> n+3-\sqrt{4n+9}$.

In this paper, we correct the theorems by exhibiting more general results for triangle-free graphs by using the classic theorem of Mantel from the extremal graph theory. Moreover, we characterize all triangle-free graphs attaining the upper bounds.


\section{Main Theorem}

\ \ \ We need the following useful lemma.
 
\begin{lemma}\label{L1}\emph{(\cite{m}) (Mantel's Theorem)}
If $G$ is a triangle-free graph of order $n$, then 
$$|E(G)|\leq \lfloor n^{2}/4\rfloor$$
with equality if and only if $G$ is isomorphic to $K_{\lfloor\frac{n}{2}\rfloor,\lceil\frac{n}{2}\rceil}$.
\end{lemma}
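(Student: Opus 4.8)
The plan is to prove the bound by an elementary extremal argument anchored at a vertex of maximum degree, and then to read off the equality case from the places where that argument is tight. First I would record the one structural fact that triangle-freeness buys us: if two vertices $u$ and $w$ are adjacent, they have no common neighbor, so $N(u)\cap N(w)=\emptyset$. In particular, if $v$ is a vertex of maximum degree $\Delta=\max_{x\in V(G)}|N(x)|$, then the set $A=N(v)$ must be independent, since any edge inside $A$ would close up a triangle through $v$.

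Next I would bound the number of edges. Writing $B=V(G)\setminus A$, so that $v\in B$ and $|B|=n-\Delta$, the independence of $A$ means every edge of $G$ has at least one endpoint in $B$. Counting edge-endpoints lying in $B$ therefore gives
$$|E(G)|\le \sum_{u\in B}|N(u)|\le \Delta\,|B|=\Delta(n-\Delta).$$
Since the integer function $\Delta\mapsto\Delta(n-\Delta)$ on $0\le\Delta\le n$ is maximized at $\Delta\in\{\lfloor n/2\rfloor,\lceil n/2\rceil\}$, where its value is $\lfloor n/2\rfloor\lceil n/2\rceil=\lfloor n^{2}/4\rfloor$, this already yields the inequality $|E(G)|\le\lfloor n^{2}/4\rfloor$.

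For the equality case I would trace back through the three inequalities above. Equality forces: (i) $\sum_{u\in B}|N(u)|=|E(G)|$, i.e. $B$ has no internal edges, so since $A$ is independent as well, $G$ is bipartite with parts $A$ and $B$; (ii) $|N(u)|=\Delta=|A|$ for every $u\in B$, so each vertex of $B$ is joined to all of $A$; and (iii) $\Delta(n-\Delta)=\lfloor n^{2}/4\rfloor$, which forces $\{|A|,|B|\}=\{\lfloor n/2\rfloor,\lceil n/2\rceil\}$. Taken together these conditions say precisely that $G\cong K_{\lfloor\frac{n}{2}\rfloor,\lceil\frac{n}{2}\rceil}$, and the converse is immediate since this graph is triangle-free and has exactly $\lfloor n^{2}/4\rfloor$ edges.

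The main obstacle is the equality characterization rather than the bound itself: one must ensure the counting inequality is tight \emph{simultaneously} with the integer optimization of $\Delta(n-\Delta)$, and handle both parities of $n$ uniformly. This is where the maximum-degree count is preferable to the classical Cauchy--Schwarz proof (summing $|N(u)|+|N(w)|\le n$ over edges $uw$), because for odd $n$ the extremal graph $K_{\frac{n-1}{2},\frac{n+1}{2}}$ is not regular, so an equality analysis resting on regularity would require an extra step, whereas the argument above sidesteps that entirely.
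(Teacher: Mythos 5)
Your proof is correct. Note, however, that the paper does not prove this lemma at all: it is quoted as a known result (Mantel's theorem) with a citation to the original 1907 source, so there is no in-paper argument to compare against. Your argument is the standard maximum-degree proof and is complete: the neighborhood $A=N(v)$ of a maximum-degree vertex is independent by triangle-freeness; every edge then has an endpoint in $B=V(G)\setminus A$, giving $|E(G)|\le\sum_{u\in B}|N(u)|\le\Delta(n-\Delta)\le\lfloor n/2\rfloor\lceil n/2\rceil=\lfloor n^{2}/4\rfloor$ for integer $\Delta$. The equality analysis is also sound: tightness of the first inequality makes $B$ independent (so $G$ is bipartite with parts $A$ and $B$); tightness of the second, combined with $N(u)\subseteq A$ and $|N(u)|=\Delta=|A|$, makes every vertex of $B$ adjacent to all of $A$; and tightness of the third pins $\{|A|,|B|\}=\{\lfloor n/2\rfloor,\lceil n/2\rceil\}$. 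Together these give $G\cong K_{\lfloor\frac{n}{2}\rfloor,\lceil\frac{n}{2}\rceil}$, and the converse direction is immediate since that graph is triangle-free with exactly $\lfloor n^{2}/4\rfloor$ edges. Your closing observation is also apt: the maximum-degree count localizes all the slack in three explicit inequalities, which makes the equality case uniform in the parity of $n$, whereas the edge-summation (Cauchy--Schwarz) proof needs extra care for odd $n$ because the extremal graph is not regular.
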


Let $\Lambda$ be the family of all graphs formed from $K_{p,p}\circ \overline{K_{p+2}}$ by adding some new edges with end points in the vertices of the copies of $\overline{K_{p+2}}$ such that no triangle is induced and $\Delta(G[V(G)\setminus V(K_{p,p})])\leq1$, for some positive integer $p$.

Let $\Omega$ be the family of all graphs formed from $K_{p,p}\circ \overline{K_{p+1}}$ by adding some new edges with end points in the copies of the $\overline{K_{p+1}}$ such that no triangle is induced and $\Delta(G[V(G)\setminus V(K_{p,p})])\leq2$, for some positive integer $p$.\vspace{28mm}\\
\begin{picture}(269.518,188.518)(0,0)
\put(46,180){\circle*{6}}
\put(86,180){\circle*{6}}
\put(46,210){\circle*{6}}
\put(86,210){\circle*{6}}

\put(116,225){\circle*{6}}
\put(106,230){\circle*{6}}
\put(96,235){\circle*{6}}
\put(86,240){\circle*{6}}

\put(16,225){\circle*{6}}
\put(26,230){\circle*{6}}
\put(36,235){\circle*{6}}
\put(46,240){\circle*{6}}

\put(116,165){\circle*{6}}
\put(106,160){\circle*{6}}
\put(96,155){\circle*{6}}
\put(86,150){\circle*{6}}

\put(16,165){\circle*{6}}
\put(26,160){\circle*{6}}
\put(36,155){\circle*{6}}
\put(46,150){\circle*{6}}

\multiput(46,180)(.065,0.047){670}{\line(2,0){.9}}
\multiput(46,210)(.06,-0.045){670}{\line(2,0){.9}}
\multiput(46,180)(0,0.1){320}{\line(2,0){.9}}
\multiput(86,210)(0,-0.1){320}{\line(2,0){.9}}

\multiput(46,180)(-.044,-0.019){670}{\line(2,0){.9}}
\multiput(46,180)(-.034,-0.029){670}{\line(2,0){.9}}
\multiput(46,180)(-.017,-0.038){670}{\line(2,0){.9}}
\multiput(46,180)(-.001,-0.046){670}{\line(2,0){.9}}

\multiput(46,210)(-.029,0.027){670}{\line(2,0){.9}}
\multiput(46,210)(-.048,0.025){670}{\line(2,0){.9}}
\multiput(46,210)(-.015,0.038){670}{\line(2,0){.9}}
\multiput(46,210)(0,0.047){670}{\line(2,0){.9}}

\multiput(86,210)(0,0.047){670}{\line(2,0){.9}}
\multiput(86,210)(0.015,0.039){670}{\line(2,0){.9}}
\multiput(86,210)(0.031,0.033){670}{\line(2,0){.9}}
\multiput(86,210)(0.047,0.024){670}{\line(2,0){.9}}

\multiput(86,180)(0,-.044){670}{\line(2,0){.9}}
\multiput(86,180)(0.013,-.034){670}{\line(2,0){.9}}
\multiput(86,180)(0.032,-.032){670}{\line(2,0){.9}}
\multiput(86,180)(0.046,-.021){670}{\line(2,0){.9}}

\multiput(116,225)(-0.15,0){670}{\line(2,0){.9}}
\multiput(96,235)(-0.09,0){670}{\line(2,0){.9}}

\multiput(116,165)(-0.15,0){670}{\line(2,0){.9}}
\multiput(106,160)(-0.12,0){670}{\line(2,0){.9}}
\multiput(96,155)(-0.09,0){670}{\line(2,0){.9}}


\put(28,179){$-1$}
\put(89,178){$-1$}
\put(27,205){$-1$}
\put(88,205){$-1$}

\put(119,224){$1$}
\put(109,230){$1$}
\put(99,235){$1$}
\put(89,240){$1$}

\put(8,160){$1$}
\put(18,155){$1$}
\put(28,150){$1$}
\put(38,145){$1$}

\put(119,160){$1$}
\put(109,155){$1$}
\put(99,150){$1$}
\put(89,145){$1$}

\put(8,225){$1$}
\put(18,230){$1$}
\put(28,235){$1$}
\put(38,240){$1$}


\put(194,179){\circle*{6}}
\put(239,179){\circle*{6}}
\put(194,211){\circle*{6}}
\put(239,211){\circle*{6}}
\put(284,179){\circle*{6}}
\put(284,211){\circle*{6}}

\put(308,225){\circle*{6}}
\put(300,232){\circle*{6}}
\put(292,238){\circle*{6}}
\put(284,244){\circle*{6}}

\put(257,244){\circle*{6}}
\put(246,244){\circle*{6}}
\put(234,244){\circle*{6}}
\put(223,244){\circle*{6}}

\put(257,147){\circle*{6}}
\put(246,147){\circle*{6}}
\put(234,147){\circle*{6}}
\put(223,147){\circle*{6}}

\put(308,159){\circle*{6}}
\put(300,154){\circle*{6}}
\put(292,150){\circle*{6}}
\put(284,146){\circle*{6}}

\put(170,224){\circle*{6}}
\put(176,231){\circle*{6}}
\put(183,237){\circle*{6}}
\put(191,243){\circle*{6}}

\put(191,148){\circle*{6}}
\put(183,152){\circle*{6}}
\put(175,156){\circle*{6}}
\put(167,160){\circle*{6}}

\multiput(239,211)(.024,.048){670}{\line(2,0){.9}}
\multiput(239,211)(-.024,.047){670}{\line(2,0){.9}}

\multiput(194,179)(-.004,-.049){670}{\line(2,0){.9}}
\multiput(194,179)(-.04,-.026){670}{\line(2,0){.9}}

\multiput(239,179)(-.025,-.049){670}{\line(2,0){.9}}
\multiput(239,179)(.025,-.049){670}{\line(2,0){.9}}

\multiput(284,211)(.037,.022){670}{\line(2,0){.9}}
\multiput(284,211)(-.002,.052){670}{\line(2,0){.9}}

\multiput(194,211)(-.005,.052){670}{\line(2,0){.9}}
\multiput(194,211)(-.04,.021){670}{\line(2,0){.9}}

\multiput(283,179)(-.002,-.052){670}{\line(2,0){.9}}
\multiput(283,179)(.04,-.032){670}{\line(2,0){.9}}

\multiput(283,179)(0,.052){670}{\line(2,0){.9}}
\multiput(283,179)(-.069,.048){670}{\line(2,0){.9}}
\multiput(283,179)(-.14,.05){670}{\line(2,0){.9}}

\multiput(194,179)(0,.052){670}{\line(2,0){.9}}
\multiput(194,179)(.07,.052){670}{\line(2,0){.9}}
\multiput(194,179)(.142,.052){670}{\line(2,0){.9}}

\multiput(239,179)(-.076,.052){670}{\line(2,0){.9}}
\multiput(239,179)(-.001,.052){670}{\line(2,0){.9}}
\multiput(239,179)(.074,.052){670}{\line(2,0){.9}}

\multiput(194,179)(.058,-.045){670}{\line(2,0){.9}}
\multiput(194,179)(-.026,.077){670}{\line(2,0){.9}}

\multiput(239,179)(.081,-.043){670}{\line(2,0){.9}}
\multiput(239,179)(-.088,-.039){670}{\line(2,0){.9}}

\multiput(284,179)(-.056,-.046){670}{\line(2,0){.9}}
\multiput(284,179)(.023,.08){670}{\line(2,0){.9}}

\multiput(239,211)(-.088,.04){670}{\line(2,0){.9}}
\multiput(239,211)(.079,.04){670}{\line(2,0){.9}}

\multiput(284,211)(.023,-.088){670}{\line(2,0){.9}}
\multiput(284,211)(-.059,.051){670}{\line(2,0){.9}}

\multiput(194,211)(.06,.052){670}{\line(2,0){.9}}
\multiput(194,211)(-.03,-.084){670}{\line(2,0){.9}}

\multiput(257,244)(-.05,0){670}{\line(2,0){.9}}
\multiput(257,147)(-.05,0){670}{\line(2,0){.9}}

\multiput(308,225)(-.015,.012){670}{\line(2,0){.9}}
\multiput(292,238)(-.015,.012){670}{\line(2,0){.9}}

\multiput(308,159)(-.015,-.009){670}{\line(2,0){.9}}
\multiput(292,150)(-.015,-.009){670}{\line(2,0){.9}}

\multiput(168,224)(.014,.014){670}{\line(2,0){.9}}
\multiput(183,237)(.014,.014){670}{\line(2,0){.9}}

\multiput(191,148)(-.014,.008){670}{\line(2,0){.9}}
\multiput(175,156)(-.014,.008){670}{\line(2,0){.9}}

\put(199,175){$-1$}
\put(244,176){$-1$}
\put(199,209){$-1$}
\put(242,208){$-1$}
\put(264,174){$-1$}
\put(265,210){$-1$}

\put(309,227){$1$}
\put(301,234){$1$}
\put(293,240){$1$}
\put(285,246){$1$}

\put(257,247){$1$}
\put(246,247){$1$}
\put(234,247){$1$}
\put(223,247){$1$}

\put(259,138){$1$}
\put(247,138){$1$}
\put(235,138){$1$}
\put(224,138){$1$}

\put(308,150){$1$}
\put(300,145){$1$}
\put(292,141){$1$}
\put(284,137){$1$}

\put(165,228){$1$}
\put(170,235){$1$}
\put(180,241){$1$}
\put(187,247){$1$}

\put(191,136){$1$}
\put(183,142){$1$}
\put(175,146){$1$}
\put(167,150){$1$}

\end{picture}\vspace{-53mm}\\
\begin{center}
A member of $\Lambda$ for $p=2$\ \ \ \ \ \ \ \ \ \ \ \ \ A member of $\Omega$ for $p=3$
\end{center}\vspace{1.3mm}
\ \ \ For convenience, we make use of the following notation. Let $G$ be a graph and $f:V(G)\longrightarrow\{-1,1\}$ be a SBF or BF of $G$. Define $V_{+}=\{v\in V(G) \mid f(v)=1 \}$ and $V_{-}=\{v\in V(G) \mid f(v)=-1 \}$. Let $[V_{+},V_{-}]$ be the set of edges having one end point in $V_{+}$ and the other in $V_{-}$.

We are now in a position to present the main theorem of the paper.

\begin{theorem}\label{T3.4}
If $G$ is a triangle-free graph of order $n$, then\vspace{1mm}\\
\emph{(i)}\ If $\delta(G)\geq1$, then $\beta_{s}(G)\leq n+6-2\sqrt{9+2n}$.\vspace{1mm}\\
\emph{(ii)}\ If $\delta(G)\geq2$, then $\beta_{D}(G)\leq n+4-2\sqrt{4+2n}$.\vspace{1mm}\\
Furthermore, the first inequality holds with equality if and only if $G\in \Lambda$ and the second one holds with equality if and only if $G\in \Omega$.
\end{theorem}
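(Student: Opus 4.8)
The plan is to treat (i) and (ii) by a single counting argument, the only differences being open versus closed neighbourhoods and the minimum-degree threshold. Fix a maximum SBF (resp.\ BF) $f$, and write $p=|V_+|$ and $q=|V_-|$, so that $n=p+q$ and the quantity to be bounded is the weight $f(V)=p-q=n-2q$; hence it suffices to bound $q$ from below. For a vertex $v$ let $d^+(v)$ and $d^-(v)$ count its neighbours in $V_+$ and in $V_-$, so that $f(N(v))=d^+(v)-d^-(v)$ and $|[V_+,V_-]|=\sum_{v\in V_+}d^-(v)=\sum_{v\in V_-}d^+(v)$.

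First I would bound $|[V_+,V_-]|$ from below. In (i) the inequality $f(N[v])\le 1$ at a vertex $v\in V_+$ becomes $d^+(v)\le d^-(v)$, and combined with $\delta(G)\ge 1$ this forces $d^-(v)\ge 1$; in (ii) the inequality $f(N(v))\le 1$ gives $d^+(v)\le d^-(v)+1$, and with $\delta(G)\ge 2$ this again forces $d^-(v)\ge 1$. In both cases $|[V_+,V_-]|\ge p$. Next I would bound $|[V_+,V_-]|$ from above via the vertices of $V_-$: the defining inequality gives $d^+(v)\le d^-(v)+2$ in (i) and $d^+(v)\le d^-(v)+1$ in (ii), and summation over $V_-$ yields $|[V_+,V_-]|\le 2|E(G[V_-])|+2q$ and $|[V_+,V_-]|\le 2|E(G[V_-])|+q$ respectively. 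Since $G[V_-]$ is triangle-free, Lemma~\ref{L1} gives $|E(G[V_-])|\le q^2/4$, so the two chains become $p\le q^2/2+2q$ and $p\le q^2/2+q$. Substituting $p=n-q$ produces $q^2+6q-2n\ge 0$ and $q^2+4q-2n\ge 0$, whence $q\ge -3+\sqrt{9+2n}$ and $q\ge -2+\sqrt{4+2n}$; inserting these into $f(V)=n-2q$ gives precisely the two claimed bounds.

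For the equality statements, the direction $G\in\Lambda\Rightarrow$ equality (and likewise for $\Omega$) is the routine one: I would take $f=-1$ on $V(K_{p,p})$ and $f=+1$ elsewhere, check via $\Delta(G[V_+])\le 1$ (resp.\ $\le 2$) and triangle-freeness that $f$ is a SBF (resp.\ BF), and compute its weight to be $2p^2+2p$ (resp.\ $2p^2$). Since for these graphs $n=2p^2+6p$ (resp.\ $2p^2+4p$) and $9+2n=(2p+3)^2$ (resp.\ $4+2n=(2p+2)^2$), this weight equals the bound, so equality holds.

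The real content is the converse. Assuming equality, every inequality in the argument above must be tight, which forces simultaneously: (a) $d^-(v)=1$ for each $v\in V_+$, so each positive vertex has a unique negative neighbour; (b) $d^+(v)=d^-(v)+2$ (resp.\ $+1$) for each $v\in V_-$; and (c) equality in Mantel's Theorem, so by Lemma~\ref{L1} $q$ is even, $q=2p$, and $G[V_-]\cong K_{p,p}$, whence $d^-(v)=p$ and therefore $d^+(v)=p+2$ (resp.\ $p+1$) for every $v\in V_-$. By (a) the set $V_+$ splits into the disjoint private neighbourhoods of the $2p$ vertices of $V_-$, of sizes $p+2$ (resp.\ $p+1$); this is exactly the corona $K_{p,p}\circ\overline{K_{p+2}}$ (resp.\ $\overline{K_{p+1}}$). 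Any remaining edges lie inside $V_+$, and at $v\in V_+$ the inequality now reads $d^+(v)\le d^-(v)=1$ (resp.\ $d^+(v)\le 2$), i.e.\ $\Delta(G[V_+])\le 1$ (resp.\ $\le 2$); triangle-freeness is inherited. This is precisely $\Lambda$ (resp.\ $\Omega$). I expect the main obstacle to be this converse and its bookkeeping: one must verify that Mantel equality is forced on the induced subgraph $G[V_-]$, deal with the parity that pins $q=2p$ (an odd $q$ gives a strictly stronger bound through the floor in Lemma~\ref{L1}), and confirm that the private-neighbourhood partition together with the internal degree cap reconstructs the families exactly and nothing larger.
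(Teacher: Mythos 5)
Your proposal is correct and follows essentially the same route as the paper: bounding $|[V_+,V_-]|$ from below by $|V_+|$ and from above via the defining inequalities at vertices of $V_-$ together with Mantel's Theorem applied to $G[V_-]$, then solving the resulting quadratic in $|V_-|$, with the equality analysis forcing tightness at every step to reconstruct $\Lambda$ and $\Omega$. Your explicit remark that Mantel equality forces $|V_-|$ to be even is a point the paper leaves implicit, but otherwise the arguments coincide.
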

\begin{proof}
(i)\ Let $f$ be a maximum SBF of $G$. It follows that, every vertex in $V_{+}$ has at least one neighbor in $V_{-}$. Also, $|N(v)\cap V_{+}|\leq|N(v)\cap V_{-}|+2$ for all $v\in V_{-}$. Furthermore, by Lemma \ref{L1} we have
\begin{equation}\label{EQ11}
\begin{array}{lcl}
|V_{+}|&\leq& |[V_{-},V_{+}]|=\sum_{v\in V_{-}}|N(v)\cap V_{+}|\leq \sum_{v\in V_{-}}(|N(v)\cap V_{-}|+2)\\
&=&2|E(G[V_{-}])|+2|V_{-}|\leq|V_{-}|^{2}/2+2|V_{-}|.
\end{array}
\end{equation}
Therefore, 
$$|V_{-}|^2+6|V_{-}|-2n\geq0.$$
Solving the above inequality for $|V_{-}|$ we obtain
$$(n-\beta_{s}(G))/2=|V_{-}|\geq-3+\sqrt{9+2n},$$
implying the desired upper bound.

Let $G\in \Lambda$. We define $f:V(G)\rightarrow\{-1,1\}$ by, 
$$f(v)=\left \{
\begin{array}{lll}
-1 & \mbox{;} & v\in V(K_{p,p}) \\
 \ 1 & \mbox{;} & \mbox{otherwise}.
\end{array}
\right.$$
It is easy to check that $f$ is a SBF of $G$ with weight $f(V(G))=\beta_{s}(G)=n+6-2\sqrt{9+2n}$.

Now let $G$ be a graph for which the equality holds and $f$ be a maximum SBF of $G$. By the inequality (\ref{EQ11}), every vertex in $V_{+}$ must have exactly one neighbor in $V_{-}$. Also, $2|E(G[V_{-}])|=|V_{-}|^{2}/2$ along with Lemma \ref{L1} implies that $G[V_{-}]=K_{|V_{-}|/2,|V_{-}|/2}$. Moreover, $|N(v)\cap V_{+}|=|N(v)\cap V_{-}|+2$ for all $v\in V_{-}$ implies that every vertex in $V_{-}$ is adjacent to exactly $|V_{-}|/2+2$ vertices in $V_{+}$. Suppose to the contrary that, there exists a vertex $v$ of the subgraph $H$ induced by $V(G)\setminus V(K_{|V_{-}|/2,|V_{-}|/2})$ with deg$_{H}(v)\geq2$. Then $f(N[v])\geq2$, a contradiction.\\
(ii)\ The proof is almost along the lines of (i). Let $f$ be a maximum BF of $G$. Since $\delta(G)
\geq2$, every vertex in $V_{+}$ has at least one neighbor in $V_{-}$. Also, $|N(v)\cap V_{+}|\leq|N(v)
\cap V_{-}|+1$ for all $v\in V(G)$. Similar to the inequality chain (\ref{EQ11}) we conclude that $$|V_{-}|^2+4|V_{-}|-2n\geq0,$$
which implies the desired upper bound.

If $G\in \Omega$, then we can obtain a BF of $G$ with weight $\beta_{D}(G)=n+4-2\sqrt{4+2n}$ by assigning $-1$ to the vertices in $K_{p,p}$ and $1$ to the other ones.
 
We now let the graph $G$ attain the upper bound by the BF $f$ of it. Similar to the part (i), we have $G[V_{-}]=K_{|V_{-}|/2,|V_{-}|/2}$. Also, every vertex in $V_{+}$ has exactly one neighbor in $V_{-}$ and $|N(v)\cap V_{+}|=|N(v)\cap V_{-}|+1$ for all $v\in V_{-}$. Finally, since $f(N(v))\leq1$, $|N(v)\cap(V(G)\setminus V(K_{|V_{-}|/2,|V_{-}|/2}))|\leq2$ for each $v\in V(G)\setminus V(K_{|V_{-}|/2,|V_{-}|/2})$. Therefore, $G\in \Omega$. This completes the proof. 
\end{proof}
\begin{rem}
The proof of Theorem \ref{T1.1} in \cite{gks} contains a gap. For the sake of completeness, we point it out. As it is presented in \cite{gks}:\\
"Let $f$ be a maximum SBF of the bipartite graph $G$ with bipartition $X_{1}$ and $X_{2}$. Define $X^-_{i}=X_{i}\cap V_{-}$ and $X^+_{i}=X_{i}\cap V_{+}$ for $i=1,2$." They claimed that $|X^+_{1}|\leq|X^-_{2}|+|X^-_{2}||X^-_{1}|$ and $|X^+_{2}|\leq|X^-_{1}|+|X^-_{2}||X^-_{1}|$. These two inequalities do not hold in general, as non of them are true for the bistar $P_{2}\circ \overline{K_{3}}$ and one of them is not true for $P_{3}$. 
Considering Theorem \ref{T3.4}, we have
\begin{equation*}
\beta_{s}(G)\leq n+6-\lceil2\sqrt{9+2n}\rceil\leq n+2-2\lceil\sqrt{n+2}\rceil
\end{equation*}
holds for each integer $1<n\notin\{3,4,5,8,9,10,15,16\}$. So, Theorem \ref{T1.1} is true for all bipartite graphs of the identified orders. On the other hand, $\beta_{s}(G)$ and $n$ have the same parity. Therefore, Part (i) of Theorem \ref{T3.4} implies that $\beta_{s}(G)\leq 0,1,3,4,7,8$ for $n=4,5,9,10,15,16$, respectively. This coincides with the upper bounds on $\beta_{s}(G)$, of a bipartite graph $G$ of these orders, by Theorem \ref{T1.1}.
\end{rem}


\end{document}